\def\ig#1#2#3#4{\begin{figure}[!ht]\begin{center}%
\includegraphics[height=#2\textheight]{#1.eps}\caption{#4}\label{#3}%
\end{center}\end{figure}}
\def\thtext#1{
  \catcode`@=11
  \gdef\@thmcountersep{. #1}
  \catcode`@=12
}
\def\threst{
  \catcode`@=11
  \gdef\@thmcountersep{.}
  \catcode`@=12
}
\theoremstyle{plain}
\newtheorem{thm}{Theorem}[section]
\newtheorem{cor}[thm]{Corollary}
\theoremstyle{definition}
\newtheorem{dfn}[thm]{Definition}
 \def\.{.\spacefactor\@m}
\def\R{\mathbb R}
\def\r{\rho}
\def\s{\sigma}
\def\0{\emptyset}
\def\:{\colon}
\def\<{\langle}
\def\>{\rangle}
\def\rom#1{\emph{#1}}
\def\({\rom(}
\def\){\rom)}
\def\ss{\subset}
\def\x{\times}
\def\dis{\operatorname{dis}}
\def\opt{{\operatorname{opt}}}
\def\cH{{\cal H}}
\def\cM{{\cal M}}
\def\cP{{\cal P}}
\def\cR{{\cal R}}
\begin{document}
 \title{Hausdorff Realization of Linear Geodesics of Gromov--Hausdorff Space.}
\author{Alexander~O.~Ivanov, Alexey~A.~Tuzhilin}
\maketitle

\begin{abstract}
We have constructed a realization of rectilinear geodesic (in the sense of~\cite{Memoli2018}), lying in Gromov--Hausdorff space, as a shortest geodesic w.r.t. the Hausdorff distance in an ambient metric space.
\end{abstract}

%%%%%%%%%%%%%%%%%%%%%%%%%%%%%%
\section*{Introduction}
\markright{\thesection.~Introduction}
%%%%%%%%%%%%%%%%%%%%%%%%%%%%%%
\noindent In~\cite{IvaNikolaevaTuz} it was shown that the space of isometry classes of compact metric spaces, endowed with the Gromov--Hausdorff metric, is a geodesic metric space. The proof consisted of two steps: first, it was shown how an optimal correspondence $R$ between two finite metric spaces $X$ and $Y$ can be endowed with a one-parametric family of metrics generating a shortest geodesic $R_t$, $t\in[0,1]$, connecting $X$ with $Y$, and such that its length equals the Gromov--Hausdorff distance between $X$ and $Y$. At the second step the Gromov Precompactness Criteria was used to prove that for any two compact metric spaces there exists a compact metric space which is their ``midpoint''. A little bit later, in~\cite{Memoli} and independently in~\cite{IvaIliadisTuz}, it was shown that a compact optimal correspondence exists between any two compact metric spaces, and if one defines a one-parametric family of metrics on it in the same manner as it was done in~\cite{IvaNikolaevaTuz}, then again a shortest geodesic with the described properties is obtained. In~\cite{Memoli2018} such geodesics were called \emph{rectilinear\/} (there are some other shortest geodesics thatare called \emph{deviant\/}  in~\cite{Memoli2018}).

In the present paper we show that for each pair $X$, $Y$ of compact metric spaces, and each compact optimal correspondence $R$ between them, the compact $R\x[0,1]$ can be endowed with a metric in such a way that for each $t$ the restriction of the metric to $R\x\{t\}$ coincides with the metric of the compact $R_t$, and that the Hausdorff distance between $R\x\{t\}$ and $R\x\{s\}$ equals to the Gromov--Hausdorff distance between $R_t$ and $R_s$. In other words, we construct a realization of the geodesic $R_t$ as a shortest geodesic w.r.t. the Hausdorff distance defined on the subsets of the space $R\x[0,1]$ endowed with some special metric.

%%%%%%%%%%%%%%%%%%%%%%%%%%%%%%
\section{Main Definitions and Preliminary Results}
\markright{\thesection.~Main Definitions and Preliminary Results}
%%%%%%%%%%%%%%%%%%%%%%%%%%%%%%
\noindent  Let $X$ be a metric space. By $|xy|$ we denote the distance between points $x,\,y\in X$.

Let $\cP(X)$ be the set of all \textbf{nonempty} subsets of the space $X$. For each $A,\,B\in\cP(X)$, and $x\in X$, we put
\begin{flalign*}
\indent&|xA|=|Ax|=\inf\bigl\{|xa|:a\in A\bigr\},\ |AB|=\inf\bigl\{|ab|:a\in A,\,b\in B\bigr\},&\\
\indent&d_H(A,B)=\max\{\sup_{a\in A}|aB|,\sup_{b\in B}|Ab|\}=\max\bigl\{\sup_{a\in A}\inf_{b\in B}|ab|,\,\sup_{b\in B}\inf_{a\in A}|ba|\bigr\}.
\end{flalign*}

\begin{dfn}
The function $d_H\:\cP(X)\x\cP(X)\to\R$ is called the \emph{Hausdorff distance}.
\end{dfn}

The set of all nonempty closed bounded subsets of the metric space $X$ is denoted by $\cH(X)$.

\begin{thm}[\cite{BurBurIva}]
The function $d_H$ is a metric on $\cH(X)$.
\end{thm}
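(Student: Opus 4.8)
The plan is to verify the standard metric axioms for $d_H$ restricted to $\cH(X)$, treating nonnegativity, symmetry, the identity of indiscernibles, and the triangle inequality in turn; the only nontrivial points are finiteness of $d_H$ on bounded sets, the identity of indiscernibles (where closedness is essential), and the triangle inequality.

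First I would observe that symmetry is immediate from the manifestly symmetric definition $d_H(A,B)=\max\{\sup_{a\in A}|aB|,\sup_{b\in B}|Ab|\}$, and that $d_H(A,B)\ge 0$ since it is a supremum of infima of nonnegative distances. To see that $d_H(A,B)<\infty$ for $A,B\in\cH(X)$, I would fix points $a_0\in A$, $b_0\in B$ and use boundedness: for any $a\in A$ one has $|aB|\le|ab_0|\le|aa_0|+|a_0b_0|\le\diam(A)+|a_0b_0|$, and symmetrically for $b\in B$, so both suprema are finite. Thus $d_H$ takes values in $\R$, justifying the stated codomain.

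Next I would establish the identity of indiscernibles. If $A=B$ then every $|aB|=0$ and every $|Ab|=0$, so $d_H(A,B)=0$. Conversely, suppose $d_H(A,B)=0$; then $\sup_{a\in A}|aB|=0$, so $|aB|=0$ for every $a\in A$, meaning each $a$ is a limit of points of $B$ and hence $a\in\Cl B=B$ because $B$ is closed. This gives $A\ss B$, and the symmetric argument gives $B\ss A$, whence $A=B$. Here is exactly where the restriction to \emph{closed} sets is indispensable: without it $d_H$ would only be a pseudometric, vanishing on pairs with the same closure.

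Finally, for the triangle inequality $d_H(A,C)\le d_H(A,B)+d_H(B,C)$ I would first prove the pointwise estimate $|aC|\le|aB|+d_H(B,C)$ for each $a\in A$: for any $b\in B$ and $c\in C$, $|ac|\le|ab|+|bc|$, and taking the infimum over $c\in C$ gives $|aC|\le|ab|+|bC|\le|ab|+d_H(B,C)$; then taking the infimum over $b\in B$ yields $|aC|\le|aB|+d_H(B,C)\le d_H(A,B)+d_H(B,C)$. Taking the supremum over $a\in A$ bounds $\sup_{a\in A}|aC|$ by the right-hand side, and the symmetric argument bounds $\sup_{c\in C}|Ac|$ likewise, so the maximum of the two is bounded as required. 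The main obstacle, if any, is purely bookkeeping in the triangle inequality --- correctly ordering the passage from the pointwise distance through the two infima to the two suprema --- together with the appeal to closedness for the indiscernibility direction.
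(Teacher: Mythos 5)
Your proof is correct. Note that the paper offers no proof of this statement at all --- it is imported from the cited textbook \cite{BurBurIva} --- and your verification (finiteness of $d_H$ on bounded sets, the use of closedness for the indiscernibility direction, and the pointwise estimate $|aC|\le|aB|+d_H(B,C)$ for the triangle inequality) is precisely the standard argument found there, so it matches the intended proof.
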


\begin{thm}[\cite{BurBurIva}]\label{thm:Hausd_dist_comp}
The space $\cH(X)$ is compact iff $X$ is compact.
\end{thm}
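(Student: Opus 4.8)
The plan is to prove the two implications separately, using the standard equivalence that a metric space is compact if and only if it is complete and totally bounded.

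\emph{The ``only if'' direction.} Suppose $\cH(X)$ is compact. The map $i\: X\to\cH(X)$, $x\mapsto\{x\}$, is an isometric embedding, since $d_H(\{x\},\{y\})=|xy|$. I would show that its image, the set $\cS$ of all singletons, is closed in $\cH(X)$: if $A\in\cH(X)$ is a Hausdorff limit of singletons $\{x_n\}$, then for every $\e>0$ eventually $\sup_{a\in A}|a\,x_n|<\e$, whence $\diam A\le2\e$; letting $\e\to0$ forces $A$ to consist of a single point, so $A\in\cS$. A closed subset of a compact space is compact, and $\cS$ is isometric to $X$, so $X$ is compact.

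\emph{The ``if'' direction, total boundedness.} Assume $X$ is compact, hence totally bounded. Fix $\e>0$ and choose a finite $\e$-net $S=\{s_1,\dots,s_k\}\ss X$. I claim the finite family of all nonempty subsets of $S$ is an $\e$-net in $\cH(X)$. Given $A\in\cH(X)$, put $S_A=\{s_i:|s_iA|<\e\}$; this set is nonempty because $S$ is an $\e$-net for $X$. On one hand, each $a\in A$ lies within $\e$ of some $s_i$, which then belongs to $S_A$, so $\sup_{a\in A}|aS_A|\le\e$; on the other hand each $s_i\in S_A$ satisfies $|s_iA|<\e$ by construction, so $\sup_{s_i\in S_A}|s_iA|\le\e$. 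Hence $d_H(A,S_A)\le\e$, and $\cH(X)$ is totally bounded.

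\emph{The ``if'' direction, completeness.} Let $(A_n)$ be a Cauchy sequence in $\cH(X)$. I would take as candidate limit the set $A=\bigcap_{N\ge1}\overline{\bigcup_{n\ge N}A_n}$, equivalently the set of all limits of convergent sequences $x_{n_k}\in A_{n_k}$ with $n_k\to\infty$; this $A$ is closed and, being a subset of the compact space $X$, bounded, and it is nonempty by a diagonal argument using the Cauchy property. To prove $A_n\to A$, fix $\e>0$ and $N$ with $d_H(A_m,A_n)<\e$ for $m,n\ge N$. For one inclusion, any $x\in A$ is a limit of points $x_{n_k}\in A_{n_k}$, so for large $k$ one has $|xA_n|\le|x\,x_{n_k}|+|x_{n_k}A_n|<2\e$. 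For the reverse inclusion I would pass to a rapidly Cauchy subsequence with $d_H(A_m,A_{m'})<\e/2^k$ for $m,m'\ge n_k$ and, starting from any $a\in A_n$, telescope to build a Cauchy sequence $a_{n_k}\in A_{n_k}$ whose limit lies in $A$ and is within a bounded multiple of $\e$ of $a$; completeness of $X$ guarantees this limit exists.

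Combining total boundedness and completeness yields compactness of $\cH(X)$, which together with the first part establishes the equivalence. The main obstacle is the completeness argument: one must define the limit set correctly and run the telescoping construction so that the produced sequence both lands in $A$ and stays within a controlled distance of the chosen point, so as to bound $d_H(A_n,A)$ from both sides.
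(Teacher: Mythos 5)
The paper does not prove this statement at all --- it is imported from \cite{BurBurIva} as a known result --- so there is no internal proof to compare against; your argument is the standard one from that reference. It is correct: the isometric embedding $x\mapsto\{x\}$ onto the (closed) set of singletons gives the ``only if'' direction, and the ``if'' direction via total boundedness (pushing a finite $\e$-net of $X$ to the family of its nonempty subsets) plus completeness (the limit set $A=\bigcap_{N}\overline{\bigcup_{n\ge N}A_n}$ with the telescoping/rapidly-Cauchy subsequence argument) is exactly the classical Blaschke-type proof, with all key steps present.
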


Let $X$ and $Y$ be metric spaces. A triple $(X',Y',Z)$ consisting of a metric space $Z$ and two its subsets $X'$ and $Y'$ isometric to $X$ and $Y$, respectively, is called a \emph{realization of the pair $(X,Y)$}. The \emph{Gromov--Hausdorff distance $d_{GH}(X,Y)$ between $X$ and $Y$} is the infimum of the reals $r$ such that there exist realizations $(X',Y',Z)$ of the pair $(X,Y)$ with $d_H(X',Y')\le r$.

\begin{thm}[\cite{BurBurIva}]
The Gromov--Hausdorff distance is a metric on the set of all isometry classes of compact metric spaces.
\end{thm}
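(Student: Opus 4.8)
The plan is to verify the four metric axioms for $d_{GH}$ on isometry classes of compact spaces. Well-definedness (independence of the chosen representatives) and symmetry are immediate from the definition, since the notion of a realization of $(X,Y)$ is symmetric in $X,Y$ and unchanged if $X,Y$ are replaced by isometric copies. Finiteness follows by exhibiting one realization: on the disjoint union $X\sqcup Y$ fix points $x_0\in X$, $y_0\in Y$ and set $|xy|=|xx_0|+c+|y_0y|$ with $c=\tfrac12(\diam X+\diam Y)$; one checks this is a metric (the only nontrivial triangle inequalities route through the gap, where the bound on $c$ suffices), and then $d_H(X,Y)<\infty$, so $0\le d_{GH}(X,Y)<\infty$. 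The two substantive axioms are the triangle inequality and the implication $d_{GH}(X,Y)=0\imply X$ isometric to $Y$.

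For the triangle inequality, given $\e>0$ I would pick realizations $(X',Y',Z_1)$ of $(X,Y)$ and $(Y'',Z'',Z_2)$ of $(Y,Z)$ with $d_H(X',Y')\le d_{GH}(X,Y)+\e$ and $d_H(Y'',Z'')\le d_{GH}(Y,Z)+\e$. Identifying the two copies $Y',Y''$ via the isometry between them, I glue $Z_1$ and $Z_2$ along this common copy into a space $W=Z_1\cup_Y Z_2$, keeping the given metrics on $Z_1,Z_2$ and setting $|pq|=\inf_{y\in Y}(|py|+|yq|)$ for $p\in Z_1$, $q\in Z_2$. After checking that this is genuinely a metric (the only cross triangle inequalities to verify pass through $Y$), $W$ is a realization of $(X,Z)$ containing $X',Y',Z''$, so the triangle inequality for $d_H$ gives $d_{GH}(X,Z)\le d_H(X',Z'')\le d_H(X',Y')+d_H(Y',Z'')\le d_{GH}(X,Y)+d_{GH}(Y,Z)+2\e$; letting $\e\to0$ closes this axiom.

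The main obstacle is positivity, where compactness is essential. Suppose $d_{GH}(X,Y)=0$. Then for each $n$ there is a realization placing $X,Y$ in a common $Z_n$ with $d_H(X,Y)<1/n$ there, and choosing nearest points yields maps $f_n\:X\to Y$ with $\bigl||f_n(x)f_n(x')|-|xx'|\bigr|\le 2/n$ for all $x,x'$, by the triangle inequality in $Z_n$. Fixing a countable dense set $\{x_k\}\ss X$ and using compactness of $Y$, I extract by a diagonal argument a subsequence along which $f_n(x_k)$ converges for every $k$; passing to the limit gives a map preserving distances on $\{x_k\}$, which, being uniformly continuous there, extends uniquely to a distance-preserving $f\:X\to Y$. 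The symmetric construction produces a distance-preserving $g\:Y\to X$. To upgrade these isometric embeddings to an isometry it remains to prove surjectivity; here I invoke the lemma that a distance-preserving self-map of a compact metric space is onto (otherwise, for a point outside the closed image, its orbit under the map is uniformly separated, contradicting total boundedness). Applying this to $g\c f$ and $f\c g$ shows $f$ and $g$ are surjective, so $f$ is the desired isometry. The delicate points to get right are the diagonal extraction and this final surjectivity lemma.
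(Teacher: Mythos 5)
Your proof is correct, but note that the paper itself offers no proof of this statement---it is quoted with a citation to~\cite{BurBurIva}---so the only meaningful comparison is with that reference, and your argument is essentially the standard one given there: gluing two near-optimal realizations along the common copy of $Y$ to get the triangle inequality, and, for the implication $d_{GH}(X,Y)=0\imply X\cong Y$, a diagonal extraction of a limiting distance-preserving map $f\:X\to Y$ (and $g\:Y\to X$) followed by the lemma that a distance-preserving self-map of a compact metric space is surjective. All the steps you flag as delicate (the diagonal argument, the uniformly-separated-orbit proof of the surjectivity lemma, the gluing metric being a genuine metric because the compact copy of $Y$ is closed) check out; the only cosmetic remark is that in your finiteness step any constant $c>0$ works, so the specific bound $c=\tfrac12(\diam X+\diam Y)$ is not actually needed.
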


\begin{dfn}
The set of all isometry classes of compact metric spaces endowed with the Gromov--Hausdorff metric is called the \emph{Gromov--Hausdorff space\/} and is denoted by $\cM$.
\end{dfn}

We need one more (equivalent) definition of the Gromov--Hausdorff distance. Recall that a \emph{relation\/} between sets $X$ and $Y$ is a subset of the Cartesian product $X\x Y$. The set of all \textbf{nonempty\/} relations between $X$ and $Y$ we denote by $\cP(X,Y)$.

\begin{dfn}
If $X$ and $Y$ are metric spaces, then the \emph{distortion $\dis\s$ of a relation $\s\in\cP(X,Y)$} is the value
$$
\dis\s=\sup\Bigl\{\bigl||xx'|-|yy'|\bigr|: (x,y),\,(x',y')\in\s\Bigr\}.
$$
\end{dfn}

A relation $R\ss X\x Y$ between sets $X$ and $Y$ is called a \emph{correspondence}, if the restrictions to $R$ of the canonical projections $\pi_X\:(x,y)\mapsto x$ and $\pi_Y\:(x,y)\mapsto y$ are surjective. The set of all correspondences between $X$ and $Y$ we denote by $\cR(X,Y)$.

\begin{thm}[\cite{BurBurIva}]\label{th:GH-metri-and-relations}
For any metric spaces $X$ and $Y$ we have
$$
d_{GH}(X,Y)=\frac12\inf\bigl\{\dis R:R\in\cR(X,Y)\bigr\}.
$$
\end{thm}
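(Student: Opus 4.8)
The plan is to establish the claimed equality by proving two opposite inequalities, one relating each realization to a correspondence and one relating each correspondence to a realization.

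For the inequality $d_{GH}(X,Y)\le\frac12\inf\{\dis R\}$, I would fix an arbitrary correspondence $R\in\cR(X,Y)$, set $r=\frac12\dis R$, and build an explicit realization on the disjoint union $Z=X\sqcup Y$. Keeping the original metrics on the two copies, I would define the cross-distances by
$$
d(x,y)=\inf_{(x',y')\in R}\bigl(|xx'|+r+|y'y|\bigr),\qquad x\in X,\ y\in Y.
$$
The first task is to verify that $d$ is a metric on $Z$ extending the given metrics; the only nontrivial triangle inequalities are those mixing the two copies, and the genuinely delicate one, $|x_1x_2|\le d(x_1,y)+d(y,x_2)$, is exactly where the distortion enters: estimating $|x_1x_2|\le|x_1x_1'|+|x_1'x_2'|+|x_2'x_2|$ and replacing $|x_1'x_2'|$ by $|y_1'y_2'|+\dis R$ via the definition of $\dis R$ yields the bound after the triangle inequality in $Y$. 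Once $d$ is a metric, surjectivity of $\pi_X$ and $\pi_Y$ restricted to $R$ guarantees that each point of one copy lies within $r$ of the other, so $d_H(X,Y)\le r$; taking the infimum over $R$ gives the inequality.

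For the reverse inequality $\frac12\inf\{\dis R\}\le d_{GH}(X,Y)$, I would start from an arbitrary realization $(X',Y',Z)$ with $d_H(X',Y')<r$ and manufacture a correspondence of small distortion. The natural choice is $R=\bigl\{(x,y):|x'y'|<r\bigr\}$, where $x'$, $y'$ denote the images in $Z$; since $d_H(X',Y')<r$, every $x'$ has a $y'$ within distance $r$ and vice versa, so both projections are surjective and $R\in\cR(X,Y)$. Estimating the distortion is then a two-line computation: for $(x_1,y_1),(x_2,y_2)\in R$ the triangle inequality in $Z$ gives $|x_1'x_2'|<|y_1'y_2'|+2r$ and symmetrically, whence $\dis R\le 2r$. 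Thus $\frac12\inf\{\dis R\}\le r$, and taking the infimum over realizations finishes this direction.

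The main obstacle is the metric verification in the first direction, specifically the triangle inequality for $d$ in the mixed cases, since this is the step that forces the factor $\tfrac12$ and pins down the exact role of $\dis R$; positivity and symmetry, as well as the whole reverse direction, are comparatively routine. A minor care point throughout is handling the infima with an arbitrary slack $\e>0$, so that the strict and weak inequalities assemble consistently before passing to the limit.
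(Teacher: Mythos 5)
The paper itself gives no proof of this statement: it is quoted verbatim from the cited reference \cite{BurBurIva}, so there is nothing internal to compare against. Your proposal is correct, and it is essentially the standard argument from that reference: the gluing construction $d(x,y)=\inf_{(x',y')\in R}\bigl(|xx'|+r+|y'y|\bigr)$ on $X\sqcup Y$ for one direction, and the correspondence $R=\{(x,y):|x'y'|<r\}$ extracted from a realization for the other, with the distortion estimate $\dis R\le 2r$ coming from two applications of the triangle inequality in $Z$. One degenerate case deserves an explicit word: if $\dis R=0$, your cross-distance vanishes on pairs in $R$ and $d$ is only a pseudometric; this is repaired either by passing to the metric quotient (both copies still embed isometrically) or by running the construction with $r+\e$ in place of $r$ and letting $\e\to0$ --- your closing remark about $\e$-slack gestures at this, but it is the one place where the slack is needed for the construction itself rather than for handling infima.
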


For topological spaces $X$ and $Y$, we consider $X\x Y$ as the topological space with the standard Cartesian product topology. Then it makes sense to talk about \emph{closed relations and correspondences}. The set of all closed correspondences between $X$ and $Y$ we denote by $\cR^c(X,Y)$.

\begin{cor}[\cite{IvaIliadisTuz}]\label{cor:dis_closed}
For metric spaces $X$ and $Y$ we have
$$
d_{GH}(X,Y)=\frac12\inf\bigl\{\dis R:R\in\cR^c(X,Y)\bigr\}.
$$
\end{cor}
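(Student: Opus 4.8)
The plan is to deduce the corollary from Theorem~\ref{th:GH-metri-and-relations} by showing that passing to closures neither destroys the correspondence property nor changes the distortion, so that the infimum over closed correspondences agrees with the infimum over all correspondences.

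First I would record the trivial inclusion $\cR^c(X,Y)\ss\cR(X,Y)$, which immediately gives
$$
\inf\bigl\{\dis R:R\in\cR(X,Y)\bigr\}\le\inf\bigl\{\dis R:R\in\cR^c(X,Y)\bigr\},
$$
hence, by Theorem~\ref{th:GH-metri-and-relations}, $d_{GH}(X,Y)\le\frac12\inf\{\dis R:R\in\cR^c(X,Y)\}$. The content of the statement lies entirely in the reverse inequality.

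For the reverse direction I would fix an arbitrary $R\in\cR(X,Y)$ and consider its closure $\bar R$ in the product topology of $X\x Y$. I first check that $\bar R$ is again a correspondence: since $R\ss\bar R\ss X\x Y$ and $\pi_X(R)=X$, we get $X=\pi_X(R)\ss\pi_X(\bar R)\ss X$, so $\pi_X(\bar R)=X$, and symmetrically $\pi_Y(\bar R)=Y$; thus $\bar R\in\cR^c(X,Y)$. The crux is then to verify $\dis\bar R=\dis R$. The inequality $\dis R\le\dis\bar R$ is clear from $R\ss\bar R$. For $\dis\bar R\le\dis R$ I would take any two points $(x,y),(x',y')\in\bar R$, approximate them by sequences $(x_n,y_n),(x'_n,y'_n)\in R$ (possible since $X\x Y$ is metrizable, hence first countable), and pass to the limit in the bound $\bigl||x_nx'_n|-|y_ny'_n|\bigr|\le\dis R$ using continuity of the distance functions, obtaining $\bigl||xx'|-|yy'|\bigr|\le\dis R$. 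Taking the supremum over such pairs yields $\dis\bar R\le\dis R$.

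Combining these, $\dis\bar R=\dis R$ for every $R\in\cR(X,Y)$, so the infimum over $\cR^c(X,Y)$ is bounded above by $\dis R$ for each $R\in\cR(X,Y)$, whence the two infima coincide and the claimed formula follows from Theorem~\ref{th:GH-metri-and-relations}. The only step requiring genuine care is the identity $\dis\bar R=\dis R$; it is not deep, but it is precisely where the topological hypothesis (closedness in the product topology) meets the metric structure, and the sequential continuity argument is what makes the passage to the closure harmless.
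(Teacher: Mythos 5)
Your proposal is correct, and every step checks out: the closure $\bar R$ of a correspondence is a closed correspondence (surjectivity of the projections is inherited from $R\ss\bar R$), and the identity $\dis\bar R=\dis R$ follows from the sequential-limit argument you give, which is valid since $X\x Y$ is metrizable; the argument even survives the case $\dis R=\infty$, which matters because the corollary is stated for arbitrary (not necessarily bounded) metric spaces. Note, however, that the paper itself contains no proof of this statement --- it is quoted from~\cite{IvaIliadisTuz} --- so there is no internal argument to compare against; your derivation from Theorem~\ref{th:GH-metri-and-relations} via closures is the standard one and is precisely the kind of reduction the cited reference carries out.
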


\begin{dfn}
A correspondence $R\in\cR(X,Y)$ is called \emph{optimal\/} if $d_{GH}(X,Y)=\frac12\dis R$. The set of all optimal correspondences between $X$ and $Y$ is denoted by $\cR_\opt(X,Y)$. The subset of $\cR_\opt(X,Y)$ consisting of all closed optimal correspondences is denoted by $\cR_\opt^c(X,Y)$.
\end{dfn}

\begin{thm}[\cite{IvaIliadisTuz}]\label{thm:optimal-correspondence-exists}
For any $X,\,Y\in\cM$ we have $\cR^c_\opt(X,Y)\ne\0$.
\end{thm}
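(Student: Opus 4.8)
The plan is to realize $\cR^c(X,Y)$ as a subset of the Hausdorff hyperspace of the compact product $X\x Y$ and to extract an optimal correspondence as a Hausdorff limit of a minimizing sequence. First I would fix on $X\x Y$ any metric inducing the product topology and turning it into a compact metric space (for instance $|(x,y)(x',y')|=\max\{|xx'|,|yy'|\}$); since $X$ and $Y$ are compact, so is $X\x Y$. Every closed correspondence is then a nonempty closed (hence bounded, by compactness) subset of $X\x Y$, so $\cR^c(X,Y)\ss\cH(X\x Y)$, and by Theorem~\ref{thm:Hausd_dist_comp} the space $\bigl(\cH(X\x Y),d_H\bigr)$ is compact.

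By Corollary~\ref{cor:dis_closed} I would choose a minimizing sequence $R_n\in\cR^c(X,Y)$ with $\dis R_n\to 2\,d_{GH}(X,Y)$. Using compactness of $\cH(X\x Y)$, I would pass to a subsequence (still denoted $R_n$) Hausdorff-converging to some $R\in\cH(X\x Y)$; in particular $R$ is a nonempty closed subset of $X\x Y$. It remains to verify two things: that $R$ is a correspondence, and that $\dis R=2\,d_{GH}(X,Y)$.

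For the correspondence property, fix $x\in X$. Each $R_n$, being a correspondence, contains a point $(x,y_n)$, and $\sup_{a\in R_n}|aR|\le d_H(R_n,R)\to 0$ supplies points of $R$ within $d_H(R_n,R)$ of $(x,y_n)$. By compactness of $Y$ I would extract a convergent subsequence $y_{n_k}\to y$; then the corresponding nearby points of $R$ converge to $(x,y)$, and closedness of $R$ gives $(x,y)\in R$. Hence $\pi_X|_R$ is surjective, and the symmetric argument (using compactness of $X$) handles $\pi_Y|_R$, so $R\in\cR^c(X,Y)$. For the distortion, given $(x,y),(x',y')\in R$, the estimate $\sup_{b\in R}|R_n b|\le d_H(R_n,R)\to 0$ produces $(x_n,y_n),(x'_n,y'_n)\in R_n$ converging to these two points; passing to the limit in $\bigl||x_nx'_n|-|y_ny'_n|\bigr|\le\dis R_n$ yields $\bigl||xx'|-|yy'|\bigr|\le\liminf\dis R_n$. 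Taking the supremum over all such pairs gives $\dis R\le 2\,d_{GH}(X,Y)$, while $\dis R\ge 2\,d_{GH}(X,Y)$ holds automatically because $R\in\cR^c(X,Y)$ and Corollary~\ref{cor:dis_closed} applies. Thus $R\in\cR^c_\opt(X,Y)$.

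The main obstacle I anticipate is the preservation of surjectivity under Hausdorff convergence: Hausdorff closeness only guarantees that points of $R_n$ and of $R$ are mutually well approximated, and without compactness of the factors the fibers $y_n$ over a fixed $x$ could escape and no limiting point $(x,y)\in R$ would exist. It is precisely here that the compactness of $X$ and $Y$ (equivalently, of $X\x Y$ and of $\cH(X\x Y)$) is used in an essential way; the lower semicontinuity of the distortion, by contrast, is a routine limiting argument.
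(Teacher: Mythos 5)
Your proof is correct, and since the paper states this theorem without proof (importing it from \cite{IvaIliadisTuz}), the right comparison is with that reference, whose argument is essentially yours: closed correspondences sit inside the compact hyperspace $\cH(X\x Y)$ (Theorem~\ref{thm:Hausd_dist_comp}), a minimizing sequence supplied by Corollary~\ref{cor:dis_closed} has a Hausdorff-convergent subsequence, surjectivity of the projections survives the limit by compactness of $X$ and $Y$, and the distortion is lower semicontinuous along Hausdorff convergence, so the limit is a closed optimal correspondence. The only micro-gap is your use of points of $R$ lying \emph{exactly} within $d_H(R_n,R)$ of $(x,y_n)$, which needs the infimum $|(x,y_n)R|$ to be attained---true here since $R$ is a closed subset of a compact space, or avoidable by allowing an extra $1/n$ in the estimate.
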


\begin{thm}[\cite{Memoli}, \cite{IvaIliadisTuz}]
For any $X,\,Y\in\cM$ and each $R\in\cR^c_\opt(X,Y)$ the family $R_t$, $t\in[0,1]$, of compact metric spaces such that $R_0=X$, $R_1=Y$, and for $t\in(0,1)$ the space $R_t$ is the set $R$ with the metric
$$
\bigl|(x,y),(x',y')\bigr|_t=(1-t)|xx'|+t\,|yy'|,
$$
is a shortest curve in $\cM$ connecting $X$ and $Y$, and its length is equal to $d_{GH}(X,Y)$.
\end{thm}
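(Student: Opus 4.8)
The plan is to establish the single Lipschitz estimate $d_{GH}(R_s,R_t)\le|s-t|\,d_{GH}(X,Y)$ for all $s,t\in[0,1]$ via an explicit correspondence, and then to read off both the length and the minimizing property from it, using the elementary lower bound valid for every curve.

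First I would dispose of the preliminaries. For $t\in(0,1)$ the function $|\cdot|_t$ is a convex combination of the two pullback pseudometrics $\pi_X^*|\cdot|$ and $\pi_Y^*|\cdot|$, hence a pseudometric; and since any two distinct points $(x,y),(x',y')\in R$ differ in at least one coordinate, $|\cdot|_t$ separates points and is therefore a genuine metric. Because $|\cdot|_t$ is continuous on the compact set $R\ss X\x Y$ and separates points, the topology it induces coincides with the subspace topology, so each $R_t$ is compact, i.e.\ $R_t\in\cM$. At the endpoints the pseudometrics $|\cdot|_0$, $|\cdot|_1$ are the pullbacks via $\pi_X$, $\pi_Y$; since $R$ is a correspondence these projections are surjective, so the metric quotients are isometric to $X$ and to $Y$, consistent with $R_0=X$, $R_1=Y$.

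The heart of the argument is the upper estimate. For $s,t\in[0,1]$ I would take the correspondence $\s_{st}\in\cR(R_s,R_t)$ that relates each point of $R_s$ to the point of $R_t$ carried by the same pair $(x,y)\in R$ (at the endpoints this is read through the projection identifications). For two elements of $\s_{st}$ coming from $(x,y)$ and $(x',y')$ the contribution to the distortion is
$$
\bigl||(x,y),(x',y')|_s-|(x,y),(x',y')|_t\bigr|=|s-t|\,\bigl||yy'|-|xx'|\bigr|\le|s-t|\,\dis R,
$$
so $\dis\s_{st}\le|s-t|\,\dis R$. By Theorem~\ref{th:GH-metri-and-relations} together with optimality of $R$ (so that $\dis R=2\,d_{GH}(X,Y)$), this gives
$$
d_{GH}(R_s,R_t)\le\tfrac12\dis\s_{st}\le|s-t|\,d_{GH}(X,Y).
$$

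It then remains to read off the length. The displayed estimate shows $t\mapsto R_t$ is (Lipschitz) continuous, hence a curve in $\cM$ joining $R_0=X$ to $R_1=Y$; and for any partition $0=t_0<\dots<t_N=1$ one has $\sum_i d_{GH}(R_{t_{i-1}},R_{t_i})\le\sum_i|t_i-t_{i-1}|\,d_{GH}(X,Y)=d_{GH}(X,Y)$, so the length of the curve is at most $d_{GH}(X,Y)$. Conversely, the length of any curve is bounded below by the distance between its endpoints (take the trivial partition), giving length $\ge d_{GH}(R_0,R_1)=d_{GH}(X,Y)$. Hence the length equals $d_{GH}(X,Y)$, and since no curve joining $X$ and $Y$ can be shorter than $d_{GH}(X,Y)$, the curve $R_t$ is a shortest one. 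The only place optimality of $R$ enters is the upper estimate; the point that needs the most care is the behavior at the endpoints $t=0,1$, where $R_0,R_1$ are the metric quotients of the pullback pseudometrics, and one must check that the correspondences $\s_{0t}$, $\s_{t1}$ really do realize the stated distortion bound through these identifications.
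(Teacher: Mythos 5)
Your proposal is correct and complete: the diagonal correspondence $\s_{st}$, the resulting Lipschitz bound $d_{GH}(R_s,R_t)\le|s-t|\,d_{GH}(X,Y)$, and the two-sided length estimate together prove the statement, and your handling of compactness of $R_t$ and of the endpoint identifications is sound. There is, however, nothing in the paper to compare this against: the theorem is stated as a known result, cited from \cite{Memoli} and \cite{IvaIliadisTuz}, and the paper itself gives no proof of it; your argument is essentially the standard one from those references. It is worth remarking that your key estimate, $\bigl||(x,y),(x',y')|_s-|(x,y),(x',y')|_t\bigr|=|s-t|\,\bigl||yy'|-|xx'|\bigr|\le|s-t|\dis R$, is precisely the computation the paper does carry out later, in the proof of its final Corollary, to verify Condition~(\ref{thm:metric_constr:2}) of Theorem~\ref{thm:metric_constr}; so your proof is fully consistent with the techniques the paper uses, just deployed at the level of the Gromov--Hausdorff distance rather than of the ambient metric construction.
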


In~\cite{Memoli2018} the curve $R_t$ was called a \emph{rectilinear geodesic} corresponding to $R\in\cR^c_\opt(X,Y)$. In the same paper it was noted that the Gromov--Hausdorff space has non rectilinear shortest geodesics.

The main result of the present paper follows from more general theorem describing a special construction of metric on the Cartesian product of a metric space and a segment of a Euclidean line.

%%%%%%%%%%%%%%%%%%%%%%%%%%%%%%
\section{A special extension of a metric to Cartesian product}
\markright{\thesection.~A special extension of a metric to Cartesian product}
%%%%%%%%%%%%%%%%%%%%%%%%%%%%%%
\noindent  Let $Z$ be an arbitrary set, and $\r_t$, $t\in[a,b]$, a one-parametric family of metrics on $Z$. For convenience, we put $|zz'|_t=\r_t(z,z')$ and $Z_t=Z\x\{t\}$. Fix some $c>0$, and define a distance function on $Z\x[a,b]$ as follows:
\begin{equation}\label{eq:dist}
\bigl|(z_1,t_1)(z_2,t_2)\bigr|=\inf_{z\in Z}\bigl(|z_1z|_{t_1}+|zz_2|_{t_2}\bigr)+c|t_1-t_2|.
\end{equation}
It is clear that this function is non negative and symmetric, i.e., it is a distance function such that its restriction to each section $Z\x\{t\}$ coincides with $\r_t$, and on the section $\{z\}\x[a,b]$ with the Euclidean distance $c|t-s|$, $t,s\in[a,b]$ (notice that the latter one does not depend on the choice of $z$); also, $|Z_tZ_s|=c|t-s|$.

To ensure the triangle inequality, one needs two more conditions.

\begin{thm}\label{thm:metric_constr}
Under the notations introduced above, suppose also that the following conditions hold\/\rom:
\begin{enumerate}
\item\label{thm:metric_constr:1} for any $z,z'\in Z$ the funciton $f(t)=|zz'|_t$, $t\in[a,b]$, is monotonic on $t$\rom;
\item\label{thm:metric_constr:2} for any $(z,t),\,(z',s)\in Z\x[a,b]$ we have
$$
|zz'|_t\le c|t-s|+|zz'|_s+c|t-s|.
$$
\end{enumerate}
Then the distance function defined by Equation~$(\ref{eq:dist})$ satisfies the triangle inequality, i.e., it is a metric\/\rom; moreover, for any $t,s\in[a,b]$ we have $d_H(Z_t,Z_s)=|Z_tZ_s|=c|t-s|$, where $d_H$ is the Hausdorff distance in $\cH\bigl(Z\x[a,b]\bigr)$.
\end{thm}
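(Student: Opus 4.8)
The plan is to handle the two assertions separately: the bulk of the work is the triangle inequality, after which the equality $d_H(Z_t,Z_s)=c|t-s|$ follows quickly. Non‑negativity and symmetry of $(\ref{eq:dist})$ are immediate, and positivity is easy: if $\bigl|(z_1,t_1)(z_2,t_2)\bigr|=0$ then $c|t_1-t_2|=0$ forces $t_1=t_2$, and then $\inf_{z}\bigl(|z_1z|_{t_1}+|zz_2|_{t_1}\bigr)=|z_1z_2|_{t_1}=0$ forces $z_1=z_2$; the same computation (take $z=z_1$) shows the restriction of $(\ref{eq:dist})$ to each $Z_t$ is exactly $\r_t$. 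So everything reduces to the triangle inequality. The geometric picture I keep in mind is that $\bigl|(z_1,t_1)(z_2,t_2)\bigr|$ is the cost of the cheapest admissible ``path'' that makes a single vertical jump from level $t_1$ to level $t_2$ at one point (cost $c|t_1-t_2|$), flanked by a horizontal move at level $t_1$ before the jump and at level $t_2$ after it; the triangle inequality then says that concatenating two such one‑jump paths can always be matched by a single one‑jump path.

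To make this precise, fix $(z_1,t_1),(z_2,t_2),(z_3,t_3)$. By $(\ref{eq:dist})$ the triangle inequality will follow once I show, for \emph{every} $u,v\in Z$,
\[
\bigl|(z_1,t_1)(z_3,t_3)\bigr|\le |z_1u|_{t_1}+|uz_2|_{t_2}+|z_2v|_{t_2}+|vz_3|_{t_3}+c|t_1-t_2|+c|t_2-t_3|,
\]
since $u$ and $v$ sit in separate summands, so taking the infimum over each on the right reproduces $\bigl|(z_1,t_1)(z_2,t_2)\bigr|+\bigl|(z_2,t_2)(z_3,t_3)\bigr|$. Bounding the left side by $|z_1w|_{t_1}+|wz_3|_{t_3}+c|t_1-t_3|$ for a suitable $w\in\{u,v\}$, and discarding $z_2$ via the $\r_{t_2}$‑triangle inequality $|uz_2|_{t_2}+|z_2v|_{t_2}\ge|uv|_{t_2}$ (which only strengthens the target), it is enough to find $w\in\{u,v\}$ with
\[
|z_1w|_{t_1}+|wz_3|_{t_3}+c|t_1-t_3|\le |z_1u|_{t_1}+|uv|_{t_2}+|vz_3|_{t_3}+c|t_1-t_2|+c|t_2-t_3|.
\]
I would now split on the position of $t_2$ relative to the segment between $t_1$ and $t_3$; this is exactly where the two hypotheses enter, each governing one case.

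In the first case $t_2$ lies between $t_1$ and $t_3$, so the vertical costs add up exactly, $c|t_1-t_2|+c|t_2-t_3|=c|t_1-t_3|$, and the inequality becomes purely horizontal. Writing $g(\tau)=|uv|_\tau$, condition $(\ref{thm:metric_constr:1})$ makes $g$ monotonic, so $g(t_2)$ lies between $g(t_1)$ and $g(t_3)$, whence $g(t_2)\ge\min\{g(t_1),g(t_3)\}$. If $g(t_3)\le g(t_2)$ I take $w=u$ and reduce, via $|uz_3|_{t_3}\le|uv|_{t_3}+|vz_3|_{t_3}$, to $|uv|_{t_3}\le|uv|_{t_2}$; if $g(t_1)\le g(t_2)$ I take $w=v$ and reduce, via $|z_1v|_{t_1}\le|z_1u|_{t_1}+|uv|_{t_1}$, to $|uv|_{t_1}\le|uv|_{t_2}$. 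One of these always holds, so monotonicity is precisely what rescues this case. In the second case $t_2$ lies outside the segment, say $t_2\ge\max\{t_1,t_3\}$ with $t_1\le t_3$ (the remaining configurations are symmetric); then the vertical side carries a surplus $c|t_1-t_2|+c|t_2-t_3|-c|t_1-t_3|=2c|t_2-t_3|$. Taking $w=u$, it remains to absorb the horizontal discrepancy $|uz_3|_{t_3}-|vz_3|_{t_3}\le|uv|_{t_3}$ into this surplus, i.e. to check $|uv|_{t_3}\le|uv|_{t_2}+2c|t_2-t_3|$ — which is exactly condition $(\ref{thm:metric_constr:2})$. The hard part is recognizing this reduction and seeing that the vertical surplus matches the Lipschitz bound of $(\ref{thm:metric_constr:2})$ on the nose; the case split itself is just careful bookkeeping.

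Finally, for the Hausdorff distance, taking $z=z_1=z_2$ in $(\ref{eq:dist})$ gives $\bigl|(z,t)(z,s)\bigr|=c|t-s|$, so every point $(z,t)\in Z_t$ is within $c|t-s|$ of its counterpart $(z,s)\in Z_s$ and vice versa; hence $d_H(Z_t,Z_s)\le c|t-s|$. Conversely $d_H(A,B)\ge|AB|$ always holds, by comparing the supremum in the definition of $d_H$ with the corresponding infimum, and $|Z_tZ_s|=c|t-s|$ was recorded just before the statement, so $d_H(Z_t,Z_s)\ge c|t-s|$. Combining the two bounds yields $d_H(Z_t,Z_s)=|Z_tZ_s|=c|t-s|$, as claimed.
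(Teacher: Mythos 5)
Your proof is correct and takes essentially the same route as the paper's: after eliminating $z_2$ via the triangle inequality at level $t_2$, you split on whether $t_2$ lies between $t_1$ and $t_3$ (where Condition~(\ref{thm:metric_constr:1}) on monotonicity finishes) or outside that segment (where Condition~(\ref{thm:metric_constr:2}) absorbs the vertical surplus $2c|t_2-t_3|$), which is exactly the paper's case analysis (a)--(c). The differences are only presentational: you argue pointwise in $u,v$ rather than under the infima, merge the two \emph{outside} cases by symmetry, and write out explicitly the positivity and the equality $d_H(Z_t,Z_s)=c|t-s|$, which the paper delegates to the remark preceding the theorem.
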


\begin{proof}
We illustrate the proof in Figure~\ref{fig:proof}.

\ig{proof}{0.3}{fig:proof}{Triangle inequality verification.}

To verify the triangle inequality, let us choose three arbitrary points $(z_i,t_i)$, $i=1,2,3$ and show that
$$
\bigl|(z_1,t_1)(z_2,t_2)\bigr|+\bigl|(z_2,t_2)(z_3,t_3)\bigr|\ge\bigl|(z_1,t_1)(z_3,t_3)\bigr|.
$$
The proof depends on the ordering of the values $t_i$ (in the figure the $t_i$ correspond to the height of the sections $Z_{t_i}$):  we get three cases up to symmetry and degeneration (when some $t_i$ equal to each other). Now we proceed each of the cases analytically. Notice that the solid polygonal lines connecting $z_i$ and $z_j$ code, in a natural way, the distances between the points $(z_i,t_i)$ and $(z_j,t_j)$; the dashed lines show how one could decrease the value $\bigl|(z_1,t_1)(z_2,t_2)\bigr|+\bigl|(z_2,t_2)(z_3,t_3)\bigr|$ to obtain a one greater or equal to $\bigl|(z_1,t_1)(z_3,t_3)\bigr|$.

The initial steps are the same in all the three cases, and they can be obtained from the following triangle inequality: $|zz_2|_{t_2}+|z_2z'|_{t_2}\ge|zz'|_{t_2}$:
\begin{multline*}
\bigl|(z_1,t_1)(z_2,t_2)\bigr|+\bigl|(z_2,t_2)(z_3,t_3)\bigr|=\\ =
\inf_{z\in Z}\bigl(|z_1z|_{t_1}+|zz_2|_{t_2}\bigr)+c|t_1-t_2|+
\inf_{z'\in Z}\bigl(|z_2z'|_{t_2}+|z'z_3|_{t_3}\bigr)+c|t_3-t_2|\ge\\ \ge
\inf_{z,z'\in Z}\bigl(|z_1z|_{t_1}+|zz'|_{t_2}+|z'z_3|_{t_3}\bigr)+c|t_1-t_2|+c|t_3-t_2|.
\end{multline*}
Then some differences between the Cases appear.

{\bf Case (a)}. We have
\begin{multline*}
\inf_{z,z'\in Z}\bigl(|z_1z|_{t_1}+|zz'|_{t_2}+|z'z_3|_{t_3}\bigr)+c|t_1-t_2|+c|t_3-t_2|=\\ =
\inf_{z,z'\in Z}\bigl(|z_1z|_{t_1}+|zz'|_{t_2}+|z'z_3|_{t_3}\bigr)+c|t_1-t_3|+c|t_3-t_2|+c|t_3-t_2|\ge\\ \ge
\inf_{z,z'\in Z}\bigl(|z_1z|_{t_1}+|zz'|_{t_3}+|z'z_3|_{t_3}\bigr)+c|t_1-t_3|\ge\\ \ge
\inf_{z\in Z}\bigl(|z_1z|_{t_1}+|zz_3|_{t_3}\bigr)+c|t_1-t_3|=\bigl|(z_1,t_1)(z_3,t_3)\bigr|,
\end{multline*}
where the first equality, according to the given order between $t_i$, follows from $c|t_1-t_2|=c|t_1-t_3|+c|t_3-t_2|$; the first inequality follows from condition~(\ref{thm:metric_constr:2}) of Theorem under consideration, that gives $c|t_3-t_2|+|zz'|_{t_2}+c|t_3-t_2|\ge|zz'|_{t_3}$; the last inequality follows from the triangle inequality $|zz'|_{t_3}+|z'z_3|_{t_3}\ge|zz_3|_{t_3}$.

{\bf Case (b)}. In this case we use Condition~(\ref{thm:metric_constr:1}) of Theorem, and thus we conclude that $|zz'|_{t_2}$ is more or equal than either $|zz'|_{t_1}$, or $|zz'|_{t_3}$. Without loss of generality, let us suppose that $|zz'|_{t_2}\ge|zz'|_{t_1}$, then
\begin{multline*}
\inf_{z,z'\in Z}\bigl(|z_1z|_{t_1}+|zz'|_{t_2}+|z'z_3|_{t_3}\bigr)+c|t_1-t_2|+c|t_3-t_2|\ge\\ \ge
\inf_{z,z'\in Z}\bigl(|z_1z|_{t_1}+|zz'|_{t_1}+|z'z_3|_{t_3}\bigr)+c|t_1-t_3|\ge\\ \ge
\inf_{z'\in Z}\bigl(|z_1z'|_{t_1}+|z'z_3|_{t_3}\bigr)+c|t_1-t_3|=\bigl|(z_1,t_1)(z_3,t_3)\bigr|,
\end{multline*}
where the first inequality can be obtained from $|zz'|_{t_2}\ge|zz'|_{t_1}$ and, according to the given order between $t_i$, from $c|t_1-t_3|=c|t_1-t_2|+c|t_3-t_2|$; the second inequality follows from the triangle inequality  $|z_1z|_{t_1}+|zz'|_{t_1}\ge|z_1z'|_{t_1}$.

{\bf Case (c)}. We have
\begin{multline*}
\inf_{z,z'\in Z}\bigl(|z_1z|_{t_1}+|zz'|_{t_2}+|z'z_3|_{t_3}\bigr)+c|t_1-t_2|+c|t_3-t_2|=\\ =
\inf_{z,z'\in Z}\bigl(|z_1z|_{t_1}+|zz'|_{t_2}+|z'z_3|_{t_3}\bigr)+c|t_1-t_2|+c|t_1-t_2|+c|t_3-t_1|\ge\\ \ge
\inf_{z,z'\in Z}\bigl(|z_1z|_{t_1}+|zz'|_{t_1}+|z'z_3|_{t_3}\bigr)+c|t_3-t_1|\ge\\ \ge
\inf_{z'\in Z}\bigl(|z_1z'|_{t_1}+|z'z_3|_{t_3}\bigr)+c|t_1-t_3|=\bigl|(z_1,t_1)(z_3,t_3)\bigr|,
\end{multline*}
where the first equality, according to the given order between $t_i$, follows from the condition $c|t_3-t_2|=c|t_3-t_1|+c|t_1-t_2|$; the first inequality can be obtained from Condition~(\ref{thm:metric_constr:2}) of Theorem, that gives $c|t_1-t_2|+|zz'|_{t_2}+c|t_1-t_2|\ge|zz'|_{t_1}$; the last inequality follows from the triangle inequality $|z_1z|_{t_1}+|zz'|_{t_1}\ge|z_1z'|_{t_1}$.
\end{proof}
As a consequence from Theorem~\ref{thm:metric_constr}, we construct a realization of a rectilinear geodesic in Gromov--Hausdorff space, as a shortest geodesic in the sense of Hausdorff metric.

%%%%%%%%%%%%%%%%%%%%%%%%%%%%%%
\section{Realization of Rectilinear Geodesics}
\markright{\thesection.~Realization of Rectilinear Geodesics}
%%%%%%%%%%%%%%%%%%%%%%%%%%%%%%
\noindent  Choose arbitrary $X,Y\in\cM$, $R\in\cR_\opt^c(X,Y)$, and construct the corresponding rectilinear geodesic $R_t$, $t\in[0,1]$. For convenience reason, the distance in $R_t$ between points $(x,y)$ and $(x',y')$ is denoted by $\bigl|(x,y),(x',y')\bigr|_t$. Put $c=\frac12\dis R$, and define the distance on $R\x[0,1]$ by Formula~(\ref{eq:dist}).

\begin{cor}
For nonisometric $X$ and $Y$, the distance function on $R\x[0,1]$ defined above is a metric such that $d_H(R_t,R_s)=d_{GH}(X,Y)|t-s|$, thus, $R_t$, being considered as a curve in the space $\cH\bigl(R\x[0,1]\bigr)$, is a shortest curve.
\end{cor}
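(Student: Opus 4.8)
The plan is to obtain the Corollary as a direct application of Theorem~\ref{thm:metric_constr}, taking $Z=R$, letting $\r_t=|\cdot|_t$ be the metric of the rectilinear geodesic $R_t$, and setting $c=\frac12\dis R$. Two preliminary observations fix the constants. First, since $R$ is optimal, the very definition of an optimal correspondence gives $c=\frac12\dis R=d_{GH}(X,Y)$, so the Hausdorff-distance equality $d_H(Z_t,Z_s)=c|t-s|$ supplied by Theorem~\ref{thm:metric_constr} will read $d_H(R_t,R_s)=d_{GH}(X,Y)\,|t-s|$, as required. Second, for nonisometric $X$ and $Y$ we have $d_{GH}(X,Y)>0$, hence $c>0$, which is exactly the standing hypothesis $c>0$ of Theorem~\ref{thm:metric_constr}; this is where the nonisometry assumption is used.

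It then remains to verify the two conditions of Theorem~\ref{thm:metric_constr} for the family $\r_t$. For fixed $z=(x,y)$ and $z'=(x',y')$ in $R$, the function
$$
f(t)=|zz'|_t=(1-t)|xx'|+t\,|yy'|=|xx'|+t\bigl(|yy'|-|xx'|\bigr)
$$
is affine in $t$, hence monotonic, so Condition~(\ref{thm:metric_constr:1}) holds automatically. For Condition~(\ref{thm:metric_constr:2}), the slope of $f$ is $|yy'|-|xx'|$, and by the definition of distortion $\bigl||xx'|-|yy'|\bigr|\le\dis R=2c$ for every pair $(x,y),(x',y')\in R$. Consequently
$$
\bigl||zz'|_t-|zz'|_s\bigr|=|t-s|\cdot\bigl||yy'|-|xx'|\bigr|\le 2c\,|t-s|,
$$
which is precisely the inequality $|zz'|_t\le c|t-s|+|zz'|_s+c|t-s|$ (and its $t\leftrightarrow s$ counterpart) demanded by Condition~(\ref{thm:metric_constr:2}). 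Thus Theorem~\ref{thm:metric_constr} applies and yields both the triangle inequality for~(\ref{eq:dist}) and the equality $d_H(R_t,R_s)=c|t-s|=d_{GH}(X,Y)\,|t-s|$.

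The shortest-curve claim then follows from a standard length estimate: a curve $t\mapsto R_t$ in $\cH\bigl(R\x[0,1]\bigr)$ whose Hausdorff distances satisfy $d_H(R_t,R_s)=c|t-s|$ is, up to the scaling factor $c$, an isometric embedding of $[0,1]$. Hence for every partition the corresponding sum of Hausdorff distances equals $c$, so the length of the curve is $c=d_H(R_0,R_1)$; since every curve joining $R_0$ to $R_1$ has length at least this distance, $R_t$ is a shortest curve.

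The point requiring genuine care, and the main obstacle, is the degeneration of the family at the endpoints. For $t\in(0,1)$ both coefficients $1-t$ and $t$ are positive, so $\r_t$ is an honest metric on $R$; but $\r_0(z,z')=|xx'|$ and $\r_1(z,z')=|yy'|$ are only pseudometrics, vanishing on pairs that share an $X$- or a $Y$-coordinate. Accordingly, formula~(\ref{eq:dist}) is a priori only a pseudometric on $R\x[0,1]$, degenerate exactly on the fibres over $t=0$ and $t=1$. I would resolve this either by observing that the triangle-inequality computation in the proof of Theorem~\ref{thm:metric_constr} uses only the pseudometric axioms and then passing to the associated metric identification — which collapses $R\x\{0\}$ onto $X$ and $R\x\{1\}$ onto $Y$, consistently with $R_0=X$, $R_1=Y$ — or by applying Theorem~\ref{thm:metric_constr} on $R\x[\e,1-\e]$ and extending to the endpoints by the same collapse. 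Away from these two fibres the distance is strictly positive: cross-level pairs are separated by the term $c|t_1-t_2|>0$, and same-level interior pairs by the genuine metric $\r_t$, so this degeneracy is the only obstruction to the bare ``metric'' assertion.
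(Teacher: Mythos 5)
Your verification of the hypotheses of Theorem~\ref{thm:metric_constr} is exactly the paper's proof: $c=\frac12\dis R=d_{GH}(X,Y)>0$ by optimality and nonisometry, Condition~(\ref{thm:metric_constr:1}) from linearity of $t\mapsto(1-t)|xx'|+t\,|yy'|$, and Condition~(\ref{thm:metric_constr:2}) from the distortion bound $\bigl||xx'|-|yy'|\bigr|\le\dis R=2c$; the explicit length estimate for the shortest-curve claim is also fine (the paper leaves that step implicit). Where you genuinely part ways with the paper is the endpoint issue, and your concern is legitimate rather than pedantic: the paper applies Theorem~\ref{thm:metric_constr} as if $\r_0$ and $\r_1$ were metrics on the set $R$, whereas they are only pseudometrics whenever a projection of $R$ fails to be injective (take $X$ a one-point space and $Y$ a two-point space, $R=X\x Y$: the two points of $R\x\{0\}$ receive distance $0$ from Formula~(\ref{eq:dist})). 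So the ``distance function on $R\x[0,1]$'' of the Corollary is in general only a pseudometric, a point the paper's proof never addresses. Your repair is the right one: the proof of Theorem~\ref{thm:metric_constr} uses only symmetry and the triangle inequality of the $\r_t$, hence holds verbatim for pseudometric families; the zero-distance identification collapses nothing outside the fibres over $t=0$ and $t=1$ (distinct fibres are at distance at least $c|t-s|>0$, and interior fibres carry genuine metrics), it maps those two fibres isometrically onto $X$ and $Y$ respectively, and the equality $d_H(R_t,R_s)=c|t-s|$ survives the quotient unchanged. In short, your proposal is correct, follows the paper's route on the main computation, and is more careful than the paper's own proof at the endpoints.
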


\begin{proof}
It suffices to verify that the conditions of Theorem~\ref{thm:metric_constr} hold in the case under consideration.

Since $X$ and $Y$ are nonisometric, then $\dis R>0$, thus $c>0$.

Further, for any $(x,y)$ and $(x',y')$ from $R$, the function $f(t)$ from Condition~(\ref{thm:metric_constr:1}) equals $(1-t)|xx'|+t|yy'|$, therefore, it is linear on $t$, thus the Condition~(\ref{thm:metric_constr:1}) holds.

At last, let us check Condition~(\ref{thm:metric_constr:2}). To do that, choose arbitrary $(x,y),(x',y')\in R$, and arbitrary  $t,s\in[0,1]$, then
\begin{multline*}
\bigl|(x,y),(x',y')\bigr|_t-\bigl|(x,y),(x',y')\bigr|_s=\\ =(1-t)|xx'|+t|yy'|-(1-s)|xx'|-s|yy'|=\\ = (t-s)\bigl(|yy'|-|xx'|\bigr)\le|t-s|\dis R=2c|t-s|,
\end{multline*}
that completes the proof.
\end{proof}

\renewcommand\bibname{References}
\addcontentsline{toc}{chapter}{\bibname}

\end{document}